\documentclass[11pt]{article}

\usepackage{amsmath,amssymb,amsthm}
\usepackage{enumitem}
\usepackage[margin=1.15in]{geometry}
\usepackage{authblk}

\newtheorem{theorem}{Theorem}[section]
\newtheorem{proposition}[theorem]{Proposition}
\newtheorem{lemma}[theorem]{Lemma}
\newtheorem{corollary}[theorem]{Corollary}
\newtheorem{example}[theorem]{Example}

\newtheorem*{theorem*}{Theorem}

\theoremstyle{definition}
\newtheorem{definition}[theorem]{Definition}

\newcommand{\R}{\mathbb{R}}
\newcommand{\id}{\operatorname{id}}

\title{Fusion rules from the Norton inequality}
\author{Alonso Castillo-Ramirez\footnote{Email: alonso.castillor@academicos.udg.mx}}
\affil{Department of Mathematics, Centro Universitario de Ciencias Exactas e Ingenier\'ias, Universidad de Guadalajara, Guadalajara, M\'exico.}
\date{}

\begin{document}

\maketitle

\begin{abstract}
The Norton inequality is one of the fundamental axioms in the theory of Majorana and axial algebras, yet its precise structural consequences have remained only partially understood. In this paper, we show that the Norton inequality alone forces the $0$- and $1$-eigenspace fusion rules for arbitrary idempotents in a commutative real algebra $A$ equipped with a Frobenius form. More precisely, if the Frobenius form is nondegenerate (as in Majorana algebras), we prove that the eigenspaces $A_0(e)$ and $A_1(e)$ of an arbitrary idempotent $e \in A$ are subalgebras and annihilate one another:
\[
        A_0(e)A_1(e)=\{0\},
\]
while in the degenerate case the corresponding inclusions hold modulo the radical of the Frobenius form. This answers a question of T. M. Mudziiri Shumba and S. Shpectorov concerning the closure of the $0$-eigenspace $A_0(e)$.
\end{abstract}

\section{Introduction}

Axial algebras are commutative non-associative algebras generated by
distinguished idempotents, called axes, whose adjoint actions satisfy prescribed
fusion laws. The motivating examples include the Griess algebra, Majorana
algebras, and Matsuo algebras \cite{HRS, MSsurvey}. A
recurring feature in these examples is the presence of a Frobenius form, namely
a symmetric bilinear form \((\cdot,\cdot)\) satisfying
\[
        (xy,z)=(x,yz), \qquad \forall x,y,z \in A.
\]

In \cite{Norton}, Norton found several fundamental identities of the Griess
algebra \(V_M\), including the inequality now bearing his name,
\[
        (x^2,y^2)\geq (xy,xy), \qquad \forall x,y\in V_M.
\]
Although this inequality is the axiom M2 that satisfy Majorana algebras \cite{Ivanov}, its precise structural role has remained unclear. For example, in \cite[p. 2]{MA12}, the authors observe that the Norton inequality has never been used in the development of Majorana theory. Determining which fusion rules are consequences of the Norton inequality alone, rather than of the stronger axioms usually imposed in Majorana and axial algebra theory, is therefore a natural structural problem.

From a complementary geometric viewpoint, Fox \cite{Fox} interprets the
Norton inequality for metrized commutative algebras as nonnegative sectional
nonassociativity and, in the Euclidean case, studies the spectrum of the adjoint operator of idempotents. 

In this paper, we prove that the fusion rules of the $0$- and $1$-eigenspaces of an arbitrary idempotent in a real commutative algebra with a Frobenius form are forced by the Norton inequality. In this general setting, for an arbitrary idempotent \(e\in A\), we relate the eigenspaces \(A_0(e)\) and \(A_1(e)\) to the radical \(A^\perp\) of the Frobenius form. If the Frobenius form is nondegenerate, as in the case of Majorana algebras, then both \(A_0(e)\) and \(A_1(e)\) are subalgebras of \(A\), and
\[
        A_0(e)A_1(e)=\{0\}.
\]
In the degenerate case, the corresponding inclusions hold modulo \(A^\perp\), and the mixed fusion rule takes the particularly simple form
\[
        A_0(e)A_1(e)\subseteq A^\perp.
\]

As a consequence, our results answer a question of T. M. Mudziiri Shumba and S. Shpectorov concerning the closure of the $0$-eigenspace of idempotents in algebras with a Frobenius form that satisfy the Norton inequality (see \cite{Auto} and \cite[Problem~5.5]{GSQuestions}).

The fact that \(A_0(e)\) and \(A_1(e)\) are subalgebras for every idempotent \(e\in A\) was proved in \cite{Castillo} in the context of Majorana algebras
under the strengthened axiom \(M2'\), namely
\[
        (x^2,y^2)\geq (xy,xy), \quad \forall x, y \in A,
\]
with equality if and only if the adjoint operators \(L_x\) and \(L_y\) commute. This was a key ingredient in the study of associative subalgebras of Majorana algebras, and in particular of the Griess algebra; see \cite{Meyer}. Since the role of M2' in \cite{Castillo} is to obtain precisely the closure of the eigenspaces $A_0(e)$ and $A_1(e)$, the results of the present paper show that M2' may be replaced there by the ordinary Norton inequality axiom M2. In contrast with the proofs in \cite{Castillo}, the proofs of this paper only use the Norton inequality itself and are independent of any prescribed axial fusion law. They are based on a \emph{Norton form} whose positive semidefiniteness is implied by the Norton inequality. Thus our results apply to arbitrary idempotents in any commutative real algebra with a Frobenius form satisfying the Norton inequality, and the usual fusion-law language appears only in our conclusions.


\section{Preliminaries}

Throughout this paper, an algebra means a commutative, not necessarily
associative, algebra over \(\R\).

\begin{definition}
Let \(A\) be an algebra. A symmetric bilinear form \((\cdot,\cdot)\) on \(A\)
is called a \emph{Frobenius form}, or an \emph{associative form}, if
\[
        (xy,z)=(x,yz)
\]
for all \(x,y,z\in A\).
\end{definition}

Equivalently, for every \(x\in A\), the adjoint operator, defined by
\[
        L_x:A\to A,\qquad L_x(y)=xy, \quad \forall y \in A, 
\]
is self-adjoint with respect to \((\cdot,\cdot)\).

\begin{definition}
Let \(A\) be an algebra equipped with a Frobenius form. We say that \(A\)
satisfies the \emph{Norton inequality} if
\[
        (x^2,y^2)\geq (xy,xy)
\]
for all \(x,y\in A\).
\end{definition}

For any $x \in A$ and \(\lambda\in\R\), denote the $\lambda$-eigenspace of the adjoint operator \(L_x\) by
\[
        A_\lambda(x)=\{v \in A: xv=\lambda v\}.
\]

We shall use the following elementary results.

\begin{lemma}\label{le1}
Let \(B\) be a positive semidefinite symmetric bilinear form on a real vector
space \(V\). If \(B(v,v)=0\), then \(B(v,w)=0\) for all \(w\in V\).
\end{lemma}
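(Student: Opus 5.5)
The plan is to argue directly from positive semidefiniteness by perturbing in the direction of $w$ and letting the scalar parameter vary. Fix $w\in V$ and consider the real quadratic polynomial
\[
        p(t)=B(v+tw,\,v+tw), \qquad t\in\R .
\]
Bilinearity and symmetry of $B$ let us expand it as
\[
        p(t)=B(v,v)+2t\,B(v,w)+t^2 B(w,w)=2t\,B(v,w)+t^2B(w,w),
\]
where the second equality uses the hypothesis $B(v,v)=0$. Positive semidefiniteness gives $p(t)\ge 0$ for every $t\in\R$.

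The key step is to show that the linear coefficient $B(v,w)$ must vanish. Suppose instead that $B(v,w)\neq 0$. Take $t$ small with sign opposite to that of $B(v,w)$; concretely, set $t=-sB(v,w)$ with $s>0$. Then
\[
        p(t)=-2sB(v,w)^2+s^2B(v,w)^2B(w,w)=sB(v,w)^2\bigl(sB(w,w)-2\bigr).
\]
This is strictly negative once $0<s<2/B(w,w)$ when $B(w,w)>0$, and for every $s>0$ when $B(w,w)=0$; note $B(w,w)\ge 0$ by semidefiniteness. This contradicts $p\ge 0$, so $B(v,w)=0$.

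There is no real obstacle here; the only point needing care is the degenerate case $B(w,w)=0$. There $p$ is linear, and a nonzero linear function on $\R$ cannot be nonnegative everywhere. The choice of $t$ above handles both cases at once. Alternatively, one could invoke the Cauchy--Schwarz inequality $B(v,w)^2\le B(v,v)B(w,w)$ for semidefinite forms, but that inequality is proved by the same discriminant argument, so the self-contained route above is preferable.
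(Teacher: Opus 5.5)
Your proposal is correct and follows the paper's proof: both expand $B(v+tw,v+tw)=2tB(v,w)+t^2B(w,w)\geq 0$ and deduce that the linear coefficient vanishes. The paper gets this from the derivative at the minimum $t=0$, while your explicit choice $t=-sB(v,w)$ does the same job, including the case $B(w,w)=0$.
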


\begin{proof}
For any \(w\in V\) and \(t\in\R\),
\[
        0\leq B(v+tw,v+tw)=2tB(v,w)+t^2B(w,w).
\]
Since $0$ is the minimum of the quadratic polynomial, its derivative at $t=0$ must vanish. Hence \(B(v,w)=0\).
\end{proof}

\begin{lemma}\label{le2}
Let $A$ be an algebra with a Frobenius form. For any
$x \in A$, eigenspaces of $L_x$ corresponding to distinct eigenvalues are
orthogonal.
\end{lemma}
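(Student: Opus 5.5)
The plan is the standard self-adjointness argument. Fix \(x\in A\) and distinct scalars \(\lambda\neq\mu\). Take \(u\in A_\lambda(x)\) and \(v\in A_\mu(x)\); we want to show \((u,v)=0\).

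The key step is to evaluate \((xu,v)\) in two ways. Using the Frobenius property together with the symmetry of the form, and commutativity of \(A\), we get
\[
        \lambda(u,v)=(xu,v)=(ux,v)=(u,xv)=\mu(u,v).
\]
This is exactly the statement, noted after the definition of a Frobenius form, that \(L_x\) is self-adjoint with respect to \((\cdot,\cdot)\).

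The displayed identity gives \((\lambda-\mu)(u,v)=0\). Since \(\lambda\neq\mu\), it follows that \((u,v)=0\).

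There is no real obstacle here. The only point needing care is the order in which associativity of the form and commutativity are invoked, so that \(x\) can legitimately be moved from \(u\) to \(v\). No nondegeneracy or Norton inequality is needed, and Lemma~\ref{le1} is not used.
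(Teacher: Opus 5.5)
Your proof is correct and is essentially the paper's proof. Both use the self-adjointness of $L_x$, which follows from the Frobenius property together with commutativity, to obtain $\lambda(u,v)=\mu(u,v)$, and then conclude $(u,v)=0$ from $\lambda\neq\mu$.
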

\begin{proof}
Let $v \in A_\lambda(x)$ and let $w \in A_\mu(x)$ with $\lambda \neq \mu$. Then,
\[ \lambda (v,w) = (\lambda v, w) = (xv , w) = (v, xw) = (v, \mu w) = \mu (v,w).  \]
Therefore, $(v,w) = 0$. 
\end{proof}


\section{The Norton inequality and fusion rules}

The main arguments of this paper are based on the definition of an auxiliary symmetric bilinear form given for each $u \in A$. 

\begin{definition}
Let $A$ be an algebra with a Frobenius form. For each fixed $u \in A$, the
\emph{Norton form} associated with $u$ is the symmetric bilinear form $B_u$ on
$A$ defined by
\[
        B_u(x,y)
        :=\bigl((L_{u^2}-L_u^2)x,y\bigr)
        =(u^2,xy)-(ux,uy),
        \qquad x,y\in A.
\]
In particular, if $e \in A$ is an idempotent, then
\[
        B_e(x,y)=\bigl((L_e-L_e^2)x,y\bigr).
\]
\end{definition}

\begin{lemma}\label{le3}
If an algebra \(A\) with a Frobenius form satisfies the Norton inequality, then the Norton form \(B_u\) is positive semidefinite for all $u \in A$.
\end{lemma}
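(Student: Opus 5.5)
The plan is to show directly that $B_u(x,x)\geq 0$ for every $x\in A$; since $B_u$ is a symmetric bilinear form, this is exactly positive semidefiniteness. Symmetry itself should come from commutativity together with the Frobenius property: $(u^2,xy)=(u^2,yx)$ and $(ux,uy)=(uy,ux)$.

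The key step is to rewrite $B_u(x,x)$ in the form of the Norton inequality. By the defining formula of the Norton form,
\[
        B_u(x,x)=(u^2,x^2)-(ux,ux).
\]
Applying the Norton inequality with the pair $(u,x)$ in place of $(x,y)$ gives $(u^2,x^2)\geq (ux,ux)$, and hence $B_u(x,x)\geq 0$.

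The only point that needs any care is the identity $\bigl((L_{u^2}-L_u^2)x,y\bigr)=(u^2,xy)-(ux,uy)$ stated in the definition, which I will verify as follows:
\[
        (L_{u^2}x,y)=(u^2x,y)=(u^2,xy)
\]
by associativity of the form and commutativity. For the second term,
\[
        (L_u^2x,y)=(u(ux),y)=(ux,uy)
\]
by self-adjointness of $L_u$. I do not expect a genuine obstacle beyond this bookkeeping.
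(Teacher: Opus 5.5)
Your proposal is correct and is exactly the paper's argument: $B_u(x,x)=(u^2,x^2)-(ux,ux)\geq 0$ is the Norton inequality applied to the pair $(u,x)$. Your extra checks of symmetry and of the identity $\bigl((L_{u^2}-L_u^2)x,y\bigr)=(u^2,xy)-(ux,uy)$ are correct, and the paper leaves them implicit in the definition of $B_u$.
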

\begin{proof}
This follows directly from the definition of the Norton inequality:
\[        B_u(x,x)=(u^2,x^2)-(ux,ux)\geq 0 \]
for all \(x\in A\). Thus \(B_u\) is positive semidefinite.
\end{proof}

As noted in \cite{KMS}, Frobenius forms in axial algebras are not necessarily nondegenerate, and their radicals play an important role in the theory of their structure. We denote the radical of the Frobenius form of $A$ by 
\[
        A^{\perp}= \{u\in A:(u,x)=0, \forall x \in A \}.
\]
The Frobenius property implies that $A^\perp$ is an ideal of $A$: if
$r\in A^\perp$ and $a,x\in A$, then $(ar,x)=(r,ax)=0$.

The following theorem is our key result.

\begin{theorem}\label{th-main}
Let \(A\) be a commutative algebra over \(\R\), equipped with a Frobenius form that satisfies the Norton inequality. 
For every idempotent \(e\in A\) and every \(\lambda \in\{0,1\}\),
\[
        (L_e - \lambda \id_A) \left( A_\lambda(e)A_\lambda(e) \right)
        \subseteq A^\perp
\]
and the mixed product satisfies the stronger inclusion
\[
        A_0(e)A_1(e) \subseteq A^\perp .
\]
\end{theorem}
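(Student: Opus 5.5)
The whole argument will apply Lemma~\ref{le1} to the Norton forms $B_u$ of Lemma~\ref{le3}. The idea is to evaluate $B_u$ at $(e,e)$ for a suitable $u$, rather than to use $B_e$ directly; we already know $B_e(v,\cdot)=0$ for $v\in A_0(e)\cup A_1(e)$. Throughout, $\equiv$ means congruence modulo $A^\perp$, and we use freely that $A^\perp$ is an ideal, so $L_e(A^\perp)\subseteq A^\perp$.

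\emph{Step 1: the same-eigenvalue inclusion.} Fix $\lambda\in\{0,1\}$ and $v\in A_\lambda(e)$. Using the Frobenius property we get $(v^2,e)=(v,ev)=\lambda(v,v)$. Hence
\[
B_v(e,e)=(v^2,e)-(ve,ve)=(\lambda-\lambda^2)(v,v)=0 .
\]
By Lemmas~\ref{le3} and~\ref{le1}, $B_v(e,z)=0$ for all $z\in A$. Expanding,
\[
(v^2,ez)-(ve,vz)=(ev^2,z)-\lambda(v^2,z)=0,
\]
so $(L_e-\lambda\id_A)(v^2)\in A^\perp$. Since $A_\lambda(e)$ is a subspace, polarizing ($v\mapsto v+w$) gives $(L_e-\lambda\id_A)(vw)\in A^\perp$ for all $v,w\in A_\lambda(e)$, which is the first claim.

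\emph{Step 2: $vw$ is a $\tfrac12$-eigenvector modulo $A^\perp$.} Let $v\in A_0(e)$ and $w\in A_1(e)$, and put $u=v+w$, so that $eu=w$. By Lemma~\ref{le2}, $(v,w)=0$, so $(u^2,e)=(u,eu)=(w,w)=(ue,ue)$ and therefore $B_u(e,e)=0$. Lemma~\ref{le1} then gives $B_u(e,z)=0$, that is, $(eu^2-uw,z)=0$ for all $z$. Now expand $u^2=v^2+2vw+w^2$ and $uw=vw+w^2$, and use Step 1 ($ev^2\equiv0$, $ew^2\equiv w^2$). This yields $eu^2-uw\equiv 2e(vw)-vw$, so $(2L_e-\id_A)(vw)\in A^\perp$.

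\emph{Step 3: from Step 2 to $vw\in A^\perp$.} This is the main obstacle: Step 2 alone only places $vw$ in a ``$\tfrac12$-eigenspace mod $A^\perp$'', and we must show it is degenerate. We squeeze $(vw,vw)$ between two Norton inequalities.
\begin{itemize}[label={}]
\item First, by Step 1 we have $ew^2\equiv w^2$ and $ev^2\equiv 0$, so $(v^2,w^2)=(v^2,ew^2)=(ev^2,w^2)=0$. Positivity of $B_v$ then gives $0\le B_v(w,w)=-(vw,vw)$.
\item Second, Step 2 gives $(L_e-L_e^2)(vw)\equiv\tfrac12 vw-\tfrac14 vw=\tfrac14 vw$. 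Positivity of $B_e$ then gives $0\le B_e(vw,vw)=\tfrac14(vw,vw)$.
\end{itemize}
Hence $(vw,vw)=0$, and thus $B_e(vw,vw)=0$. One more application of Lemma~\ref{le1} gives $0=B_e(vw,z)=\tfrac14(vw,z)$ for all $z$, i.e.\ $vw\in A^\perp$. This proves $A_0(e)A_1(e)\subseteq A^\perp$.
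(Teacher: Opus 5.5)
Your proposal is correct and follows the paper's proof essentially step for step. The steps match as follows:
\begin{enumerate}
\item Lemma~\ref{le1} applied to $B_v$ at $(e,e)$, followed by polarization, gives the same-eigenvalue inclusion.
\item $B_{v+w}$ at $(e,e)$ gives $(2L_e-\id_A)(vw)\in A^\perp$.
\item You squeeze $(vw,vw)$ between the Norton inequality (your $B_v(w,w)\ge 0$) and the positivity of $B_e$, and finish with one more application of Lemma~\ref{le1}.
\end{enumerate}
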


\begin{proof}
Let \( x \in A_\lambda(e)\) and consider the Norton form $B_x$, which is positive semidefinite by Lemma \ref{le3}. As $e$ is idempotent, we have
\[ B_x(e,e) = (x^2, e^2)-(xe, xe) = (x^2,e) - \lambda^{2}(x,x) =  \lambda (x,x) -  \lambda^{2}(x,x) = 0 ,\]
since $\lambda \in \{0,1\}$. By Lemma \ref{le1}, $B_x(e,z) = 0$ for all $z \in A$. Hence,
\[ 0= B_x(e,z) = (x^2, ez) - (xe,xz) = (ex^2 -\lambda x^2, z). \]
This shows that
\[ (L_e - \lambda \operatorname{id}_A)(x^2) \in A^{\perp}, \quad \forall x \in A_\lambda(e).\]
Now, for any $x,y \in A_\lambda(e)$, we use the identity
\[ xy = \frac{1}{2}(x+y)^2 - \frac{1}{2}x^2 - \frac{1}{2}y^2.  \]
Therefore,  
\[ (L_e - \lambda \operatorname{id}_A)(xy) = \frac{1}{2}(L_e - \lambda \id_A)((x+y)^2) - \frac{1}{2}(L_e - \lambda \id_A)(x^2) - \frac{1}{2}(L_e - \lambda \id_A)(y^2) \in A^\perp. \]  
We now prove the mixed case. Let $x \in A_0(e)$ and $y \in A_1(e)$, and set $u:=x+y$. Since $eu = y$, we have
\begin{align*}
 B_u(e,e) & = (u^2, e^2) - (ue, ue) \\
& = (u^2,e) - (y,y) \\
& = (u, eu) - (y,y) \\
& = (x+y,y) - (y,y) \\
& = (x,y) =0, 
\end{align*}
where the last equality follows by Lemma \ref{le2}. Now, by Lemma \ref{le1}, $B_u(e,z)=0$ for all $z \in A$. This means that
\begin{align*}
 0= B_u(e,z) & = (eu^2, z) - (ue, uz) \\
& = (e(x+y)^2, z) - (y, (x+y)z) \\
& = (e(x^2 + 2xy + y^2),z) - (xy,z) - (y^2,z). 
\end{align*}
Therefore, 
\[ ex^2 + ey^2 + 2 e(xy) - xy - y^2 \in A^\perp. \]
By the first part of the proof, $ex^2 \in A^\perp$ and
$ey^2-y^2\in A^\perp$, so
\begin{equation}\label{eq:mixed-mod-radical}
        (2L_e-\id_A)(xy)\in A^\perp.
\end{equation}
Set $z:=xy$. Now \eqref{eq:mixed-mod-radical} is equivalent to 
\[ L_e z + A^\perp = \frac{1}{2} z + A^\perp, \]
and since $A^\perp$ is an ideal, it is $L_e$-invariant, so we have
\begin{align*}
L_e \left( L_e z + A^\perp \right) & = L_e \left( \frac{1}{2} z + A^\perp \right) \\
L_e^2 z + A^\perp & = \frac{1}{2} \left( L_e z + A^\perp \right) \\
L_e^2 z + A^\perp & = \frac{1}{4} z + A^\perp. 
\end{align*}
Consequently,
\begin{equation}\label{eq:Be-mixed}
        B_e(z,w) = \bigl((L_e-L_e^2)z, w\bigr) =\frac14(z,w), \qquad \forall w\in A.
\end{equation}
Moreover,
\[
        (x^2,y^2)=(x^2,ey^2)=(ex^2,y^2)=0,
\]
where the first equality holds because $ey^2-y^2\in A^\perp$. Applying the
Norton inequality to $x$ and $y$ now gives
\[
        (z,z)\leq (x^2,y^2)=0.
\]
On the other hand, the positive semidefiniteness of $B_e$ and
\eqref{eq:Be-mixed} give
\[
        0\leq B_e(z,z)=\frac14(z,z).
\]
Therefore, $(z,z)=0$ and $B_e(z,z)=0$. Lemma \ref{le1} applied to $B_e$ yields
$B_e(z,w)=0$ for every $w\in A$. By \eqref{eq:Be-mixed}, $(z,w)=0$ for every $w\in A$,
and hence $z=xy\in A^\perp$.
\end{proof}

When the Frobenius form is nondegenerate, Theorem \ref{th-main} gives us the following consequence.

\begin{corollary}\label{cor1}
Let \(A\) be a commutative algebra over \(\R\) equipped with a nondegenerate Frobenius form that satisfies the Norton inequality. If \(e\in A\) is an idempotent, then both \(A_0(e)\) and \(A_1(e)\) are
subalgebras of \(A\), and they annihilate one another:
\[
        A_0(e)A_1(e)= \{0\}.
\]
\end{corollary}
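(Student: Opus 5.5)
The corollary follows from Theorem~\ref{th-main} by setting $A^\perp=\{0\}$, which is exactly what nondegeneracy of the Frobenius form means. The plan is to verify the two claims separately: closure of each eigenspace, and the mixed annihilation.

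\textbf{Subalgebra property.} Fix $\lambda\in\{0,1\}$ and $x,y\in A_\lambda(e)$. The first inclusion of Theorem~\ref{th-main} gives
\[
(L_e-\lambda\,\id_A)(xy)\in A^\perp=\{0\},
\]
so $e(xy)=\lambda\, xy$, that is, $xy\in A_\lambda(e)$. Since $A_\lambda(e)$ is the kernel of the linear map $L_e-\lambda\,\id_A$, it is a linear subspace. Being closed under multiplication, it is therefore a subalgebra of $A$.

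\textbf{Mixed product.} For $x\in A_0(e)$ and $y\in A_1(e)$, the second inclusion of Theorem~\ref{th-main} gives $xy\in A^\perp=\{0\}$. Hence $xy=0$. Because $A_0(e)A_1(e)$ is spanned by such products (or consists of them, depending on the convention), it follows that $A_0(e)A_1(e)=\{0\}$.

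There is no genuine obstacle here: all the substance lies in Theorem~\ref{th-main}. The only point to check is that nondegeneracy means precisely $A^\perp=\{0\}$, which is immediate from the definition of the radical.
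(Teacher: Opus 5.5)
Your proposal is correct and matches the paper's proof: nondegeneracy gives $A^\perp=\{0\}$, the first inclusion of Theorem~\ref{th-main} then yields $e(xy)=\lambda xy$ for $x,y\in A_\lambda(e)$, and the second inclusion gives $A_0(e)A_1(e)=\{0\}$. Your extra remarks, that $A_\lambda(e)$ is a subspace and on how the product set is interpreted, are harmless refinements of the same argument.
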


\begin{proof}
Since the Frobenius form is nondegenerate, \(A^\perp=\{0\}\). By Theorem
\ref{th-main}, \(e(xy)=\lambda xy\) for every \(x,y\in A_\lambda(e)\) and
\(\lambda\in\{0,1\}\). Thus \(xy\in A_\lambda(e)\), so \(A_\lambda(e)\) is a
subalgebra of \(A\).

The mixed fusion rule follows directly from the second inclusion in Theorem
\ref{th-main}.
\end{proof}

This nondegenerate case includes the situations when the Frobenius form is positive definite, such as in Majorana algebras and the Griess algebra \cite{Ivanov,Norton}. It also
applies to Matsuo algebras whose canonical Frobenius form is nondegenerate and satisfies the Norton inequality; see, for example \cite{MSsurvey}.

For primitive axial algebras, the nondegeneracy hypothesis can be expressed in
the intrinsic language of the axial radical. Recall that if \((A,X)\) is a
primitive axial algebra, its axial radical \(R(A,X)\) is the unique maximal
ideal disjoint from the set \(X\) of generating axes \cite{KMS}.

\begin{corollary}
Let \((A,X)\) be a primitive axial algebra over \(\R\) equipped with a Frobenius form satisfying the Norton inequality and $(a,a)\neq 0$, for every $a\in X$. If \(R(A,X)= \{0\}\), then, for every idempotent \(e\in A\), both \(A_0(e)\) and \(A_1(e)\) are subalgebras
of \(A\), and
\[
        A_0(e)A_1(e)= \{0 \}.
\]
\end{corollary}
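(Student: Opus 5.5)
The plan is to reduce this corollary to Corollary \ref{cor1}: I will show that the radical $A^\perp$ of the Frobenius form is zero. By Theorem \ref{th-main} and Corollary \ref{cor1}, the two eigenspace fusion rules and the annihilation $A_0(e)A_1(e)=\{0\}$ then follow for every idempotent $e\in A$.

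To see that $A^\perp=\{0\}$, I use that $A^\perp$ is an ideal of $A$, which was noted just after Lemma \ref{le3}: if $r\in A^\perp$ and $a,x\in A$, then $(ar,x)=(r,ax)=0$. The hypothesis $(a,a)\neq 0$ for every $a\in X$ says that no axis lies in $A^\perp$, so $A^\perp\cap X=\emptyset$. By definition, $R(A,X)$ is the unique maximal ideal of $A$ disjoint from $X$. The claim to extract from \cite{KMS} is that every ideal disjoint from $X$ is contained in $R(A,X)$; this is what "unique maximal" means here. Granting it, $A^\perp\subseteq R(A,X)=\{0\}$, so the Frobenius form is nondegenerate and Corollary \ref{cor1} applies directly.

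The only delicate point is the containment of every $X$-free ideal in $R(A,X)$. If I wanted to avoid citing \cite{KMS}, I would argue as follows for a primitive axis $a$. An ideal $I$ satisfies $aI\subseteq I$, so $I$ decomposes into $L_a$-eigencomponents, assuming $L_a$ is semisimple as is standard for axes. If the $1$-component of $I$ were nonzero, primitivity ($A_1(a)=\mathbb{R}a$) would force $a\in I$. So $I\subseteq\bigoplus_{\lambda\neq 1}A_\lambda(a)$, and the sum of two such ideals has the same property. Hence the sum of all $X$-free ideals is again $X$-free, which gives a unique maximal one containing all the others. In practice I would simply cite the characterization from \cite{KMS} and note that $A^\perp\cap X=\emptyset$ because $(a,a)\neq0$.
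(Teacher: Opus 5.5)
Your proof is correct and follows the paper's route: deduce $A^\perp=\{0\}$ from $R(A,X)=\{0\}$, then apply Corollary~\ref{cor1}. The paper cites the full equality $R(A,X)=A^\perp$ from \cite[Theorem~3.3]{KMS}, whereas you note that only the easy inclusion $A^\perp\subseteq R(A,X)$ is needed; this follows because $A^\perp$ is an ideal containing no axis (as $(a,a)\neq 0$) and $R(A,X)$ is the largest such ideal.
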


\begin{proof}
By \cite[Theorem~3.3]{KMS}, if a primitive axial algebra admits a Frobenius
form which is nonzero on every generating axis, then the axial radical
\(R(A,X)\)  coincides with the radical \(A^\perp\) of the
Frobenius form. Hence \(A^\perp= \{0\}\), and the result follows from Corollary
\ref{cor1}.
\end{proof} 

Another useful consequence of Theorem \ref{th-main} occurs when \(e\in A\) is a
\emph{semisimple} idempotent, meaning that the adjoint operator \(L_e\) is diagonalizable.

\begin{proposition}
Let \(A\) be a commutative algebra over \(\R\) equipped with a Frobenius form
that satisfies the Norton inequality. Let \(e\in A\) be a semisimple
idempotent. Then, for \(\lambda\in\{0,1\}\),
\[
        A_\lambda(e)^2\subseteq A_\lambda(e)+A^\perp,
\]
and
\[
        A_0(e)A_1(e)\subseteq A^\perp.
\]
\end{proposition}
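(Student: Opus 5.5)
The mixed inclusion $A_0(e)A_1(e)\subseteq A^\perp$ is already contained in Theorem \ref{th-main}, so nothing new is needed there. The plan therefore concentrates on the first inclusion. For $x,y\in A_\lambda(e)$, Theorem \ref{th-main} gives
\[
        w:=(L_e-\lambda\id_A)(xy)\in A^\perp .
\]
What remains is to lift this statement to $xy\in A_\lambda(e)+A^\perp$. This lifting step is where semisimplicity of $e$ enters, and I expect it to be the only real point of the proof.

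Since $L_e$ is diagonalizable, write $A=\bigoplus_{\mu}A_\mu(e)$ and decompose $z:=xy=\sum_\mu z_\mu$ with $z_\mu\in A_\mu(e)$. Then
\[
        w=\sum_{\mu\neq\lambda}(\mu-\lambda)z_\mu .
\]
The radical $A^\perp$ is an ideal, so it is $L_e$-invariant. Because $L_e$ is diagonalizable, its restriction to the invariant subspace $A^\perp$ is also diagonalizable. Equivalently, the spectral projections $\pi_\mu$ onto $A_\mu(e)$ are polynomials in $L_e$ and hence preserve $A^\perp$. Applying $\pi_\mu$ to $w\in A^\perp$ gives $(\mu-\lambda)z_\mu\in A^\perp$, and hence $z_\mu\in A^\perp$ for every $\mu\neq\lambda$. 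It follows that
\[
        z=z_\lambda+\sum_{\mu\neq\lambda}z_\mu\in A_\lambda(e)+A^\perp .
\]
Finally, $A_\lambda(e)^2$ is the span of such products $xy$, and $A_\lambda(e)+A^\perp$ is a subspace, so the inclusion $A_\lambda(e)^2\subseteq A_\lambda(e)+A^\perp$ follows.

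The main obstacle is justifying that the spectral projections preserve $A^\perp$. I would state this explicitly via Lagrange interpolation: for each eigenvalue $\mu$ of $L_e$,
\[
        \pi_\mu=\prod_{\nu\neq\mu}\frac{L_e-\nu\id_A}{\mu-\nu},
\]
where the product runs over the finitely many distinct eigenvalues $\nu$ of $L_e$ (finitely many because $A$ is finite-dimensional). Every polynomial in $L_e$ preserves any $L_e$-invariant subspace, and in particular preserves $A^\perp$.

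An alternative route avoids the projections. Since $L_e$ is diagonalizable, $L_e-\lambda\id_A$ is invertible on $\bigoplus_{\mu\neq\lambda}A_\mu(e)$. One can then find $v\in A^\perp\cap\bigoplus_{\mu\neq\lambda}A_\mu(e)$ with $(L_e-\lambda\id_A)v=w$, and this again yields $z-v\in A_\lambda(e)$.
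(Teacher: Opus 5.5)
Your proof is correct and is essentially the paper's argument. You decompose $xy$ along the eigenspaces of the diagonalizable $L_e$, use that the $L_e$-invariant subspace $A^\perp$ splits as $\bigoplus_\mu (A^\perp\cap A_\mu(e))$ to get $z_\mu\in A^\perp$ for $\mu\neq\lambda$, and take the mixed inclusion from Theorem~\ref{th-main}. Your Lagrange-interpolation step just makes explicit the splitting that the paper asserts without proof.

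One small point: the paper never assumes $A$ is finite-dimensional. Instead of appealing to finitely many eigenvalues of $L_e$, take the interpolation product only over the finitely many eigenvalues occurring in the decomposition of $xy$; the argument then goes through verbatim.
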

\begin{proof}
Let $\lambda\in\{0,1\}$ and let $x,y\in A_\lambda(e)$. By Theorem
\ref{th-main},
\[
        (L_e-\lambda\operatorname{id}_A)(xy)\in A^\perp.
\]

Since \(L_e\) is
diagonalizable, we can write
\[
        xy=\sum_\alpha z_\alpha,
        \qquad
        z_\alpha\in A_\alpha(e),
\]
where the sum runs over the eigenvalues of \(L_e\). Then
\[
        (L_e-\lambda\operatorname{id}_A)(xy)
        =
        \sum_\alpha(\alpha-\lambda)z_\alpha
        \in A^\perp.
\]
Because \(A^\perp\) is \(L_e\)-invariant and \(L_e\) is diagonalizable, \(A^\perp\) is the
direct sum of its intersections with the eigenspaces:
\[
       A^\perp=\bigoplus_\alpha\bigl(A^\perp \cap A_\alpha(e)\bigr).
\]
Therefore, $(\alpha-\lambda)z_\alpha\in A^\perp$, and hence $z_\alpha\in A^\perp$ for every $\alpha\neq \lambda$. It follows that
\[
        xy=z_\lambda+r
\]
for some \(r\in A^\perp\). Hence
\[
        xy\in A_\lambda(e)+A^\perp.
\]
The mixed inclusion is already part of Theorem \ref{th-main}.
\end{proof}

We finish the paper with an example of an axial algebra with a degenerate Frobenius form satisfying the Norton inequality in which the $0$-eigenspace of one of its axes is not a subalgebra of $A$.

\begin{example}\label{ex}
Let \(A\) be the real commutative algebra with basis $\{a_1,a_2,a_3,r\}$, with $a_1, a_2, a_3$ idempotents and multiplication determined by
 \[ a_1a_2=a_1a_3=0,\qquad a_2a_3=\frac12(a_2+a_3-r),\] 
\[ a_1r = a_2r=a_3r=r,\qquad r^2=0. \]
Then $A$ is an axial algebra with axes $a_1, a_2, a_3$ satisfying the following fusion law:
\[
\begin{array}{c|ccc}
\star & 1 & 0 & \frac12 \\ \hline
1 & 1 & 1 & \frac12 \\
0 & 1 & 0,1 & 1 \\
\frac12 & \frac12 & 1 & 1
\end{array}
\]
Note that the axes are nonprimitive because the $1$-eigenspaces $A_1(a_i) = \langle a_i, r \rangle$, for $i \in \{1,2,3\}$, are two-dimensional. Define a symmetric bilinear form on \(A\) by 
\[ (a_1,a_1)= (a_2,a_2)=(a_3,a_3)=(a_2,a_3)=1, \] 
and all other pairings between basis elements equal to zero. Direct calculations show that this is a Frobenius form with radical 
\[ A^\perp=\langle a_2-a_3,r\rangle. \] 
To check that $A$ satisfies the Norton inequality we can note that the quotient 
\[ A / A^\perp = \left\langle a_1 + A^\perp, \; \frac{1}{2} (a_2 + a_3) + A^\perp \right\rangle  \]
is a two-dimensional associative algebra where the Frobenius form is the usual dot product. Since $A / A^\perp$ satisfies the Norton inequality, then $A$ satisfies the Norton inequality. However, 
\[ A_0(a_1)=\langle a_2,a_3\rangle, \]
 while
 \[ (a_2-a_3)^2 =a_2^2-2a_2a_3+a_3^2 =r. \] 
Since $a_1r=r\neq 0$, we have \(r\notin A_0(a_1)\). Thus \(A_0(a_1)\) is not a subalgebra of $A$.
\end{example}

\section*{Use of artificial intelligence}

The author declares the use of the artificial intelligence tool ChatGPT 5.5 to assist with language editing, organization, and feedback on the consistency of some arguments. All mathematical statements, proofs, references, and final editorial decisions were independently checked and verified by the author, who takes full responsibility for the content of the manuscript.


\end{document}